\pdfoutput=1
\documentclass[letterpaper,12pt]{amsart}
\oddsidemargin=8pt
\evensidemargin=8pt
\textheight=582pt
\textwidth=450pt

\usepackage{amsmath,amssymb,amsthm}
\usepackage{amsaddr}
\usepackage{xurl}
\usepackage{hyperref}
\usepackage[capitalise]{cleveref}
\hypersetup{colorlinks=true,citecolor=red}
\usepackage{enumitem}
\setlistdepth{10}
\renewlist{itemize}{itemize}{10}
\setlist[itemize]{label=\textbullet}

\newtheorem{theorem}{Theorem}[section]
\newtheorem{lemma}[theorem]{Lemma}
\newtheorem{corollary}[theorem]{Corollary}
\newtheorem{proposition}[theorem]{Proposition}

\theoremstyle{definition}
\newtheorem{remark}[theorem]{Remark}
\newtheorem{definition}[theorem]{Definition}
\newtheorem{example}[theorem]{Example}

\newcommand\gen[1]{\langle{#1}\rangle}
\newcommand\N{\mathbb{N}}
\newcommand\Z{\mathbb{Z}}

\DeclareMathOperator{\size}{\mathsf{size}}
\DeclareMathOperator{\lc}{lc}
\DeclareMathOperator{\Fin}{\mathsf{Fin}}
\DeclareMathOperator{\Kdim}{\mathsf{Kdim}}
\DeclareMathOperator{\List}{\mathsf{List}}

\begin{document}

\title[Quantitative Hilbert's basis theorem and constructive Krull dimension]{A quantitative Hilbert's basis theorem and the constructive Krull dimension}
\author{Ryota Kuroki}
\email{kuroki-ryota128@g.ecc.u-tokyo.ac.jp}
\address{Graduate School of Mathematical Sciences, The University of Tokyo, 3-8-1 Komaba, Meguro-ku, Tokyo, 153-8914, Japan}

\begin{abstract}
In classical mathematics, Gulliksen has introduced the length of Noetherian modules, and Brookfield has determined the length of Noetherian polynomial rings.
Brookfield's result can be regarded as a quantitative version of Hilbert's basis theorem.
In this paper, based on the inductive definition of Noetherian modules in constructive algebra, we introduce a constructive version of the length called $\alpha$-Noetherian modules, and present a constructive proof of some results by Brookfield.
As a consequence, we obtain a new constructive proof of $\dim K[X_0,\ldots,X_{n-1}]<1+n$ and $\dim\mathbb{Z}[X_0,\ldots,X_{n-1}]<2+n$, where $K$ is a discrete field.

\end{abstract}

\keywords{Constructive algebra, Hilbert's basis theorem, Krull dimension}
\subjclass[2020]{Primary: 16P40; Secondary: 03F65}

\maketitle
\section{Introduction}
In this paper, all rings are assumed to have an identity, and the term ``module'' refers to a left module.
In constructive arguments, an ordinal means a Cantor normal form (i.e., an ordinal less than $\varepsilon_0$). See \cite{CC06,Gri13,NFXG20,KNFX21,KNFX23} for type-theoretic treatments of Cantor normal forms.
In fact, we only need ordinals less than $\omega^\omega$ to obtain results on the Krull dimension, and such ordinals can be expressed as
\[\omega^{n-1}\cdot a_{n-1}+\cdots+\omega^1\cdot a_1+\omega^0\cdot a_0\quad(n\in\N,\ a_0,\ldots,a_{n-1}\in\N,\ a_{n-1}\ge1).\]

Hilbert's basis theorem is an important topic in constructive algebra.
In exploring constructive versions of Hilbert's basis theorem, several definitions of Noetherian rings have been considered \cite{BSB23}, including Richman--Seidenberg Noetherian rings \cite{Ric74,Sei74,MRR88}.
Among them, Jacobsson and Löfwall's one \cite[Definition 3.4]{JL91} and Coquand and Persson's one \cite[Section 3.1]{CP99} are (generalized) inductive definitions.

In this paper, we quantify the inductive definition and define $\alpha$-Noetherian rings for an ordinal $\alpha$. Then we constructively prove a quantitative version of Hilbert's basis theorem and present an application to the Krull dimension of polynomial rings.

In Section 2, based on the inductive definitions of Noetherianity \cite{JL91,CP99}, we define the notion of $\alpha$-Noetherian modules and $\alpha$-Noetherian rings for an ordinal $\alpha$.
Every discrete field is $1$-Noetherian, and $\Z$ is $\omega$-Noetherian.
If a commutative ring $A$ is $\alpha$-Noetherian for some $\alpha<\omega^n$, then $\Kdim A<n$ holds (\cref{N-K}).

In Section 3, we prove a quantitative version of Hilbert's basis theorem (\cref{qhbt}): if a ring $A$ is $\alpha$-Noetherian, then $A[X]$ is $(\omega\otimes\alpha)$-Noetherian, where $\otimes$ denotes the Hessenberg natural product.
This gives a new constructive proof of $\Kdim K[X_1,\ldots,X_n]<1+n$ and $\Kdim \Z[X_1,\ldots,X_n]<2+n$, where $K$ is a discrete field.

In classical mathematics, our main theorems have already been proved.
The notion of $\alpha$-Noetherian rings is essentially introduced by Gulliksen \cite{Gul73} and developed by Brookfield \cite{Bro02} in the form of the length $l(M)$ of a Noetherian module $M$.
In fact, a module $M$ is $\alpha$-Noetherian if and only if $M$ is Noetherian and $l(M)\le\alpha$ (\cref{length}).
For every Noetherian ring $A$, Brookfield \cite[Theorem 3.1]{Bro03} has proved that $l(A[X])=\omega\otimes l(A)$.
As noted in \cite{Bro03}, when $A$ is Noetherian, Brookfield's theorem implies $\Kdim A[X]=\Kdim A+1$ since $l(A)<\omega^n$ is equivalent to $\Kdim A<n$ \cite[Theorem 2.3]{Gul73}.
\section{\texorpdfstring{$\alpha$}{α}-Noetherian rings}
We first introduce some notation.
Let $[\alpha,\beta]:=\{\gamma:\alpha\le\gamma\le\beta\}$, $[\alpha,\beta):=\{\gamma:\alpha\le\gamma<\beta\}$, and $(\alpha,\beta]:=\{\gamma:\alpha<\gamma\le\beta\}$.
Let $\List S$ denote the set of finite lists of elements of a set $S$.
We may sometimes write a list $[x_0,\ldots,x_{n-1}]\in\List S$ as $[x_0,\ldots,x_{n-1}]_S$.
The expression $[]$ denotes the empty list.
For $\sigma=[x_0,\ldots,x_{n-1}]_S$ and $x\in S$, let $\sigma.x:=[x_0,\ldots,x_{n-1},x]_S$.
For a module $M$ over a ring $A$, let $\gen{x_0,\ldots,x_{n-1}}$ denote the submodule of $M$ generated by $x_0,\ldots,x_{n-1}\in M$.
Let $\gen{[x_0,\ldots,x_{n-1}]_M}:=\gen{x_0,\ldots,x_{n-1}}$.

We define $\alpha$-good lists and $\alpha$-Noetherian rings for an ordinal $\alpha$.
\begin{definition}
Let $M$ be a module over a ring $A$ and $\alpha$ be an ordinal.
    \begin{enumerate}
        \item A list $[x_0,\ldots,x_{n-1}]_M$ is called \emph{$(-1)$-good} (or simply \emph{good}), if $n\ge1$ and $x_{n-1}\in\gen{x_0,\ldots,x_{n-2}}$.
        Note that this definition is different from the definition of a good list in \cite[Section 3.1]{CP99}.
        \item A list $\sigma\in\List M$ is called \emph{$\alpha$-good} if for every $x\in M$, there exists $\beta\in[-1,\alpha)$ such that $\sigma.x$ is $\beta$-good.
        \item A module $M$ is called \emph{$\alpha$-Noetherian} if $[]_M$ is $\alpha$-good.
        A ring $A$ is called \emph{(left) $\alpha$-Noetherian} if it is $\alpha$-Noetherian as a left $A$-module.
    \end{enumerate}
\end{definition}
\begin{remark}
    The above definition of $\alpha$-Noetherian modules is a quantitative version of the following generalized inductive definition of Noetherian modules:
    \begin{enumerate}
        \item A list $[x_0,\ldots,x_{n-1}]_M$ is called \emph{good} if $n\ge1$ and $x_{n-1}\in\gen{x_0,\ldots,x_{n-2}}$.
        \item We inductively generate the predicate ``\emph{good bars $\sigma$}'' by the following constructors:
        \begin{enumerate}
            \item If $\sigma$ is good, then good bars $\sigma$.
            \item If good bars $\sigma.x$ for every $x$, then good bars $\sigma$.
        \end{enumerate}
        \item A module $M$ is called \emph{Noetherian} if good bars $[]\in\List M$.
    \end{enumerate}
    Inductive definitions of Noetherian rings have been introduced by Jacobsson, Löfwall \cite[Definition 3.4]{JL91}, Coquand, and Persson \cite[Section 3.1]{CP99}.
    Although the above inductive definition is similar to these, it is different from the definition in \cite{JL91} because the above definition does not contain negation, and it is also different from the one in \cite{CP99} (at least on the surface) because the above definition uses a stronger notion of goodness.
    We do not know whether the above definition of Noetherian modules is equivalent to the one in \cite{CP99}.
\end{remark}
\begin{remark}
    We also have the following alternative definition of $\alpha$-Noetherian modules.
    \begin{enumerate}
        \item A finitely generated submodule $N$ of $M$ is \emph{$\alpha$-blocked} if for every $x\in M$,
        \begin{enumerate}
            \item $x\in N$, or
            \item there exists $\beta\in[0,\alpha)$ such that $N+\gen{x}$ is $\beta$-blocked.
        \end{enumerate}
        \item A module $M$ is called \emph{$\alpha$-Noetherian} if $0\subseteq M$ is $\alpha$-blocked.
    \end{enumerate}
    The above definition of $\alpha$-blocked module is a quantitative version of the following modified negation-free definition of the blocked modules \cite[Definition 3.1]{JL91}:
    \begin{itemize}
        \item A finitely generated submodule $N$ of $M$ is \emph{blocked} if for every $x\in M$,
        \begin{enumerate}
            \item $x\in N$ or
            \item there exists $\beta\in[0,\alpha)$ such that $N+\gen{x}$ is blocked.
        \end{enumerate}
    \end{itemize}
\end{remark}
\begin{example}\label{ex-noe}
Let $K$ be a discrete field.
    \begin{enumerate}
        \item A ring is $0$-Noetherian if and only if it is trivial.
        \item A ring is $1$-Noetherian if and only if it is a discrete field.
        \item Let $n\in\N$. The rings $\Z/\gen{2^n}$ and $K[X]/\gen{X^n}$ are $n$-Noetherian.
        \item The rings $\Z$ and $K[X]$ are $\omega$-Noetherian.
    \end{enumerate}
\end{example}
More generally, we can define $\alpha$-Euclidean rings and prove that they are $\alpha$-Noetherian.
\begin{definition}
    Let $M$ be a module over a ring $A$ and $\alpha$ be an ordinal.
    \begin{enumerate}
        \item An element $x\in M$ is called \emph{$(-1)$-Euclidean} if $x=0$.
        \item An element $x\in M$ is called \emph{$\alpha$-Euclidean} if for every $y\in M$, there exist $\beta\in[-1,\alpha)$ and $\beta$-Euclidean element $z\in M$ such that $z-y\in\gen{x}$.
        \item A module $M$ is called \emph{$\alpha$-Euclidean} if for every $x\in M$, there exists $\beta\in[-1,\alpha)$ such that $x$ is $\beta$-Euclidean. A ring $A$ is called \emph{(left) $\alpha$-Euclidean} if it is $\alpha$-Euclidean as a left $A$-module.
    \end{enumerate}
\end{definition}
\begin{remark}
    In classical mathematics, Motzkin \cite[Section 2]{Mot49} has introduced a transfinite version of the Euclidean ring, and it has been studied by several authors \cite{Fle71,Sam71,Hib75,Hib77,Nag78,Nag85,Cla15,CNT19}.
    Non-commutative Euclidean rings are studied in \cite{Ore33,Coh61,Bru73}.
    Euclidean modules are studied in \cite{Len74,Rah02,LC14}.
    We note that Lenstra \cite{Len74} has treated all three generalizations of Euclidean rings.
\end{remark}
\begin{example}
    Let $K$ be a discrete field.
    \begin{enumerate}
        \item A ring is $0$-Euclidean if and only if it is a trivial ring.
        \item A ring is $1$-Euclidean if and only if it is a discrete field.
        \item Let $n\in\N$. The rings $\Z/\gen{2^n}$ and $K[X]/\gen{X^n}$ are $n$-Euclidean.
        \item The rings $\Z$ and $K[X]$ are $\omega$-Euclidean.
    \end{enumerate}
\end{example}
We use the following lemma to prove that every $\alpha$-Euclidean module is $\alpha$-Noetherian.
\begin{lemma}\label{E-N-lemma}
    Let $M$ be a module over a ring $A$, $\alpha$ be an ordinal, and $\sigma\in\List M$.
    If $\gen{\sigma}$ contains an $\alpha$-Euclidean element $x$, then $\sigma$ is $\alpha$-good.
\end{lemma}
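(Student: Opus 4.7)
The plan is to proceed by transfinite induction on $\alpha$, proving the following statement at each stage: for every module $M$ over $A$, every $\sigma\in\List M$, and every $\alpha$-Euclidean element $x\in\gen{\sigma}$, the list $\sigma$ is $\alpha$-good.

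Fix $\alpha$, $\sigma$, and an $\alpha$-Euclidean $x\in\gen{\sigma}$. Unfolding the definition of $\alpha$-good, I fix $y\in M$ and must exhibit some $\beta\in[-1,\alpha)$ for which $\sigma.y$ is $\beta$-good. Since $x$ is $\alpha$-Euclidean, its defining property applied to $y$ delivers a $\beta\in[-1,\alpha)$ together with a $\beta$-Euclidean element $z\in M$ such that $z-y\in\gen{x}$. Because $x\in\gen{\sigma}\subseteq\gen{\sigma.y}$, I get $\gen{x}\subseteq\gen{\sigma.y}$, and therefore $z=y+(z-y)\in\gen{\sigma.y}$. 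In other words, $\gen{\sigma.y}$ contains a $\beta$-Euclidean element, which is precisely the hypothesis of the lemma at level $\beta$.

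It remains to conclude that $\sigma.y$ is $\beta$-good, and here I split cases on $\beta$. If $\beta\ge 0$, then $\beta$ is an ordinal strictly below $\alpha$, so the inductive hypothesis applied to $\sigma.y$ and $z$ gives that $\sigma.y$ is $\beta$-good, as needed. If $\beta=-1$, the inductive hypothesis does not apply, but the condition collapses: $(-1)$-Euclidean means $z=0$, so $-y=(z-y)-z\in\gen{x}\subseteq\gen{\sigma}$, hence $y\in\gen{\sigma}$, and $\sigma.y$ is $(-1)$-good directly from the definition. In both cases the required $\beta\in[-1,\alpha)$ has been produced, completing the induction step.

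I do not anticipate a genuine obstacle; the proof is essentially a translation between the two parallel hierarchies ($\alpha$-Euclidean elements and $\alpha$-good lists). The only point requiring care is that the value $\beta=-1$ lies outside the range over which one performs transfinite induction, so it must be treated as a separate base case; this matches the way the two inductive definitions are anchored at $-1$ in the excerpt.
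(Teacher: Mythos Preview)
Your proof is correct and follows essentially the same route as the paper: transfinite induction on $\alpha$, applying the $\alpha$-Euclidean property of $x$ to the element $y$, then splitting on whether the resulting $\beta$ equals $-1$ (base case, forcing $y\in\gen{\sigma}$) or lies in $[0,\alpha)$ (inductive step, using $z\in\gen{\sigma.y}$). Your write-up is slightly more explicit about the chain of inclusions $\gen{x}\subseteq\gen{\sigma}\subseteq\gen{\sigma.y}$, but the argument is the same.
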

\begin{proof}
    We prove this by induction on $\alpha$.
    \begin{itemize}
        \item Let $y\in M$. Since $x\in\gen{\sigma}$ is $\alpha$-Euclidean, there exists $\beta\in[-1,\alpha)$ and a $\beta$-Euclidean element $z\in M$ such that $z-y\in\gen{\sigma}$.
        \begin{enumerate}
            \item If $\beta=-1$, then $z=0$. Hence $y\in\gen{\sigma}$, and $\sigma.y$ is good.
            \item If $\beta\in[0,\alpha)$, then $z\in\gen{\sigma.y}$. Hence $\sigma.y$ is $\beta$-good by the inductive hypothesis.
        \end{enumerate}
    \end{itemize}
    Hence $\sigma$ is $\alpha$-good.
\end{proof}
\begin{theorem}[{Classically proved in \cite[Theorem 3.17]{Cla15}}]\label{E-N}
    Let $\alpha$ be an ordinal and $M$ be a module over a ring $A$.
    If $M$ is an $\alpha$-Euclidean module over a ring $A$, then $M$ is $\alpha$-Noetherian.
\end{theorem}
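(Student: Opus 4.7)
The plan is to reduce the theorem immediately to \cref{E-N-lemma} by unfolding the definition of $\alpha$-Noetherianity and applying the lemma to singleton lists. Concretely, $M$ being $\alpha$-Noetherian means $[]_M$ is $\alpha$-good, which in turn means that for every $x \in M$ we must exhibit some $\beta \in [-1, \alpha)$ with $[x]$ being $\beta$-good.

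So I would fix an arbitrary $x \in M$ and use the hypothesis that $M$ is $\alpha$-Euclidean to obtain $\beta \in [-1, \alpha)$ for which $x$ itself is $\beta$-Euclidean. I would then split on $\beta$. In the generic case $\beta \geq 0$, the element $x$ is a $\beta$-Euclidean element sitting inside $\gen{[x]}$, so \cref{E-N-lemma} (applied with $\sigma = [x]$) directly gives that $[x]$ is $\beta$-good, as required.

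The only delicate point is the degenerate case $\beta = -1$: here $x = 0$, and one cannot naively quote \cref{E-N-lemma} because that would require $\sigma = []_M$ to be good, which fails since good lists must have length $\geq 1$. Instead one handles this case by hand, observing that $[0]$ has last entry $0 \in \gen{} = 0$ and hence is good, i.e.\ $(-1)$-good.

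I expect this to be the whole argument, and I anticipate no substantive obstacle: the inductive content of the theorem has already been absorbed into \cref{E-N-lemma}, and what remains is bookkeeping plus the $\beta = -1$ edge case outlined above.
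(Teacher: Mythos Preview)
Your proposal is correct and matches the paper's proof essentially line for line: fix $x\in M$, obtain $\beta\in[-1,\alpha)$ with $x$ $\beta$-Euclidean, handle $\beta=-1$ by noting $[0]$ is good, and for $\beta\ge 0$ invoke \cref{E-N-lemma} with $\sigma=[x]$.
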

\begin{proof}
    Let $x\in M$. Then, there exists $\beta\in[-1,\alpha)$ such that $x$ is $\beta$-Euclidean.
        \begin{enumerate}
            \item If $\beta=-1$, then $x=0$. Hence $[x]$ is good.
            \item If $\beta\in[0,\alpha)$, then $[x]$ is $\beta$-good by \cref{E-N-lemma}.
        \end{enumerate}
    Hence $[]_M$ is $\alpha$-good.
\end{proof}
We next prove that a sequence indexed by $[0,\alpha]$ in an $\alpha$-Noetherian module contains a reversed good subsequence.
\begin{lemma}\label{N-K-lemma}
    Let $M$ be a module over a ring $A$, $\alpha$ be an ordinal, and $f:[0,\alpha)\to M$ be a function. Let $\beta\in[-1,\alpha)$.
    If there exist $n\in\N$ and a strictly decreasing sequence $\alpha_0,\ldots,\alpha_{n-1}\in(\beta,\alpha]$ such that $[f(\alpha_0),\ldots,f(\alpha_{n-1})]$ is $\beta$-good, then there exist $m\in\N$ and a strictly decreasing sequence $\alpha_{n},\ldots,\alpha_{n+m-1}\in[0,\alpha_{n-1})$ such that $[f(\alpha_0),\ldots,f(\alpha_{n+m-1})]$ is good.
\end{lemma}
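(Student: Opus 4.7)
My plan is to proceed by transfinite induction on $\beta \in [-1,\alpha)$, with the sequence $\alpha_0,\ldots,\alpha_{n-1}$ (and the hypothesis that the associated list is $\beta$-good) quantified over in the induction hypothesis. The key insight I expect to exploit is that the bound $\alpha_{n-1}$ sits strictly above $\beta$ in $[0,\alpha]$, so $\beta$ itself (when $\beta\geq 0$) is an admissible value for the next index $\alpha_n$; feeding $f(\beta)$ to the $\beta$-goodness witness will drop the ordinal parameter and keep the sequence strictly decreasing simultaneously.

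For the base case $\beta=-1$, the hypothesis says the list $[f(\alpha_0),\ldots,f(\alpha_{n-1})]$ is already $(-1)$-good, i.e.\ good in the sense of Definition~(1), so I simply take $m=0$; the constraint on the empty tail of $\alpha_n,\ldots,\alpha_{n+m-1}$ holds vacuously.

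For the inductive step with $\beta\geq 0$, since $\alpha_{n-1}\in(\beta,\alpha]$ we have $\beta<\alpha_{n-1}\le\alpha$ and $\beta\geq 0$, so $\beta\in[0,\alpha)$ lies in the domain of $f$. I set $\alpha_n:=\beta$ and invoke $\beta$-goodness of $[f(\alpha_0),\ldots,f(\alpha_{n-1})]$ at the element $f(\alpha_n)=f(\beta)$, which produces some $\gamma\in[-1,\beta)$ such that $[f(\alpha_0),\ldots,f(\alpha_n)]$ is $\gamma$-good. The extended sequence $\alpha_0>\cdots>\alpha_{n-1}>\alpha_n=\beta$ is strictly decreasing and, since $\gamma<\beta=\alpha_n$, lies in $(\gamma,\alpha]$. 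I then apply the inductive hypothesis at the smaller ordinal $\gamma$ to obtain a further strictly decreasing extension $\alpha_{n+1},\ldots,\alpha_{n+m}\in[0,\alpha_n)$ making the full list good; since $\alpha_n=\beta<\alpha_{n-1}$, all these new indices (together with $\alpha_n$ itself) lie in $[0,\alpha_{n-1})$, as required.

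The only nontrivial point is the choice $\alpha_n:=\beta$, which synchronizes two decreases at once — the ordinal labelling the quality of the list drops below $\beta$, and the index in $[0,\alpha]$ drops below $\alpha_{n-1}$ — making the induction on $\beta$ go through cleanly. Everything else amounts to checking that the resulting sequence meets the stated bounds.
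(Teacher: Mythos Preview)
Your proof is correct and follows essentially the same approach as the paper's: transfinite induction on $\beta$, with the key move of setting $\alpha_n:=\beta$ in the inductive step so that applying $\beta$-goodness at $f(\beta)$ simultaneously extends the strictly decreasing sequence and drops the goodness parameter below $\beta$. Your write-up is in fact slightly more careful than the paper's (you explicitly verify $\beta\in[0,\alpha)$ lies in the domain of $f$ and that the new indices land in $[0,\alpha_{n-1})$).
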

\begin{proof}
    We prove this by induction on $\beta$.
    \begin{enumerate}
        \item If $\beta=-1$, then $[f(\alpha_0),\ldots,f(\alpha_{n-1})]$ is good.
        \item Let $\beta\in[0,\alpha)$. Since $[f(\alpha_0),\ldots,f(\alpha_{n-1})]$ is $\beta$-good, there exists $\beta'\in[-1,\beta)$ such that $[f(\alpha_1),\ldots,f(\alpha_{n-1}),f(\beta)]$ is $\beta'$-good.
        By the inductive hypothesis, there exist $m\in\N$ and a strictly decreasing sequence $\alpha_{n+1},\ldots,\alpha_{n+m}\in[0,\beta)$ such that \[[f(\alpha_0),\ldots,f(\alpha_{n-1}),f(\beta),f(\alpha_{n+1}),\ldots,f(\alpha_{n+m})]\] is good.\qedhere
    \end{enumerate}
\end{proof}
\begin{theorem}\label{N-K-aux}
    Let $M$ be a module over a ring $A$, $\alpha$ be an ordinal, $f:[0,\alpha)\to M$ be a function, and $\beta\in[0,\alpha)$. If $M$ is $\beta$-Noetherian, then there exist $m\in\N$ and a strictly decreasing sequence $\alpha_{0},\ldots,\alpha_{m-1}\in[0,\alpha)$ such that $[f(\alpha_0),\ldots,f(\alpha_{m-1})]$ is good.
\end{theorem}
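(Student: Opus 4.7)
The plan is to reduce the statement to a single application of \cref{N-K-lemma}, using a length-one starter sequence built from $f(\beta)$. Since $M$ is $\beta$-Noetherian, the empty list $[]_M$ is $\beta$-good by definition. Unfolding this at the element $f(\beta)\in M$ yields some $\gamma\in[-1,\beta)$ for which $[f(\beta)]=[]_M.f(\beta)$ is $\gamma$-good. This provides exactly the starting data needed to feed into \cref{N-K-lemma}.

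Concretely, I would invoke \cref{N-K-lemma} with the role of the lemma's ``$\beta$'' played by $\gamma$, with $n:=1$, and with $\alpha_0:=\beta$. The hypothesis $\alpha_0\in(\gamma,\alpha]$ holds because $\gamma<\beta=\alpha_0<\alpha$, and $[f(\alpha_0)]=[f(\beta)]$ is $\gamma$-good by construction. The lemma then supplies some $m\in\N$ and a strictly decreasing tail $\alpha_1,\ldots,\alpha_m\in[0,\beta)$ such that $[f(\alpha_0),f(\alpha_1),\ldots,f(\alpha_m)]$ is good. Prepending $\alpha_0=\beta$ to this tail gives a strictly decreasing sequence in $[0,\alpha)$ of length $m+1$ whose image under $f$ is good, as required.

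There is essentially no obstacle beyond correctly instantiating the parameters of \cref{N-K-lemma} and checking the containment $\beta\in(\gamma,\alpha]$. The theorem is short precisely because the induction on goodness-degree has already been absorbed into \cref{N-K-lemma}; this result is just the ``initialization'' step that converts $\beta$-Noetherianity of $M$ into a length-one $\gamma$-good starter sequence satisfying the hypothesis of the lemma.
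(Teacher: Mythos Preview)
Your proof is correct and follows essentially the same route as the paper. The only difference is that the paper invokes \cref{N-K-lemma} directly with $n:=0$ (the empty starting sequence is vacuously in $(\beta,\alpha]$ and $[]_M$ is $\beta$-good by hypothesis), whereas you perform one unfolding step by hand and then apply the lemma with $n:=1$; your unfolding is exactly the first pass through case~(2) of the lemma's own induction, so the two arguments coincide after that step.
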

\begin{proof}
    Let $n:=0$ in \cref{N-K-lemma}.
\end{proof}
We recall the following elementary characterization of the Krull dimension by Lombardi \cite[Définition 5.1]{Lom02}.
See \cite[Note historique]{Lom23} for the background of the constructive definition of the Krull dimension.
\begin{definition}
    Let $n\in\N$. A ring $A$ is of \emph{Krull dimension less than $n$} if for every $x_0,\ldots,x_{n-1}\in A$, there exists $e_0,\ldots,e_{n-1}\ge0$ such that
    \[x_0^{e_0}\cdots x_{n-1}^{e_{n-1}}\in\gen{x_0^{e_0+1},x_0^{e_0}x_1^{e_1+1},\ldots,x_0^{e_0}\cdots x_{n-2}^{e_{n-2}}x_{n-1}^{e_{n-1}+1}}.\]
    Let $\Kdim A< n$ denote the statement that $A$ is of Krull dimension less than $n$.
\end{definition}
The following relation between the notion of $\alpha$-Noetherian rings and the Krull dimension easily follows from \cref{N-K-aux}:
\begin{theorem}[{Classically proved in \cite[Theorem 2.3]{Gul73}}]\label{N-K}
    Let $n\in\N$ and $A$ be a commutative ring.
    If $A$ is $\alpha$-Noetherian for some $\alpha<\omega^n$, then $\Kdim A<n$.
\end{theorem}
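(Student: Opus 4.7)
The plan is to apply \cref{N-K-aux} to $A$, viewed as a left $A$-module, with the domain ordinal taken to be $\omega^n$; since $A$ is $\alpha$-Noetherian for some $\alpha<\omega^n$, the Noetherianity hypothesis of \cref{N-K-aux} is met. Given arbitrary $x_0,\ldots,x_{n-1}\in A$ for which Krull-dimension witnesses must be produced, I will feed into \cref{N-K-aux} the function $f:[0,\omega^n)\to A$ defined via Cantor normal forms: each $\gamma\in[0,\omega^n)$ decomposes uniquely as
\[
\gamma=\omega^{n-1}e_0+\omega^{n-2}e_1+\cdots+\omega e_{n-2}+e_{n-1}\qquad(e_0,\ldots,e_{n-1}\in\N),
\]
and I set $f(\gamma):=x_0^{e_0}x_1^{e_1}\cdots x_{n-1}^{e_{n-1}}$. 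This indexing is chosen so that ordinal order on $[0,\omega^n)$ corresponds exactly to lexicographic order on the exponent tuples.

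Then \cref{N-K-aux} produces some $m\ge 1$ and a strictly decreasing sequence $\gamma_0>\cdots>\gamma_{m-1}$ in $[0,\omega^n)$ such that $[f(\gamma_0),\ldots,f(\gamma_{m-1})]$ is good, i.e.\ $f(\gamma_{m-1})\in\gen{f(\gamma_0),\ldots,f(\gamma_{m-2})}$ (the right-hand side being $0$ when $m=1$). Writing the Cantor coefficients of $\gamma_{m-1}$ as $(e_0,\ldots,e_{n-1})$, I will propose precisely these as the required Krull-dimension exponents. The goal then becomes $f(\gamma_{m-1})\in J$, where
\[
J:=\gen{x_0^{e_0+1},\ x_0^{e_0}x_1^{e_1+1},\ \ldots,\ x_0^{e_0}\cdots x_{n-2}^{e_{n-2}}x_{n-1}^{e_{n-1}+1}},
\]
and by the good condition it suffices to establish $f(\gamma_i)\in J$ for each $i\in[0,m-1)$.

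The key combinatorial step is this: since $\gamma_i>\gamma_{m-1}$, comparing their Cantor normal forms yields a least index $k\in\{0,\ldots,n-1\}$ at which the coefficients differ, and at that index the coefficient of $\gamma_i$ strictly exceeds $e_k$. Consequently the $k$-th listed generator of $J$, namely $x_0^{e_0}\cdots x_{k-1}^{e_{k-1}}x_k^{e_k+1}$, divides $f(\gamma_i)$ in $A$, whence $f(\gamma_i)\in J$. I expect the only genuine obstacle to be the Cantor-normal-form bookkeeping---lining up the ordinal indexing with the lexicographic order on the exponent tuples that define $J$, and handling the degenerate case $m=1$ where the empty generating set forces $f(\gamma_0)=0\in J$. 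Once the indexing convention is fixed, the rest reduces to the routine observation that a ``first-difference'' comparison of ordinals translates directly into a divisibility relation between the corresponding monomials.
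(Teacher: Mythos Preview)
Your proposal is correct and follows essentially the same approach as the paper: both apply \cref{N-K-aux} to the monomial-indexing function $f:[0,\omega^n)\to A$ and read off the Krull-dimension exponents from the smallest term of the resulting good reversed subsequence. Your write-up simply spells out the divisibility step (showing each $f(\gamma_i)\in J$ via the first-difference index in the Cantor normal form) that the paper leaves implicit in its terse ``Hence $\Kdim A<n$.''
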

\begin{proof}
    Let $x_{0},\ldots,x_{n-1}\in A$.
    Define $f:\omega^n\to A$ by $f(e_{n-1},\ldots,e_1,e_0):=x_0^{e_0}\cdots x_{n-1}^{e_{n-1}}$.
    We put the anti-lexicographic order on $\omega^n$ and identify it with $[0,\omega^n)$.
    By \cref{N-K-aux}, there exist $m\in\N$ and a strictly decreasing sequence $\alpha_{0},\ldots,\alpha_{m-1}\in[0,\omega^n)$ such that $[f(\alpha_0),\ldots,f(\alpha_{m-1})]$ is good. Hence $\Kdim A<n$.
\end{proof}
\begin{example}
    Since $\Z$ is $\omega$-Noetherian, every sequence $a_0,a_1,\ldots,a_\omega\in\Z$ has a reversed good subsequence. In particular, $x^0,x^1,\ldots,y\in\Z$ has a reversed good subsequence for every $x,y\in\Z$. This implies $\Kdim\Z<2$.
\end{example}
\section{The quantitative Hilbert's basis theorem}
In this section, we will prove the following quantitative version of Hilbert's basis theorem:
    \begin{itemize}
        \item If a module $M$ over a ring $A$ is $\alpha$-Noetherian, then the $A[X]$-module $M[X]$ is $(\omega\otimes\alpha)$-Noetherian, where $\otimes$ denotes the Hessenberg natural product.
    \end{itemize}
We use some basic facts about transfinite chomp described in \cite{HS02}.
\begin{definition}
    A set $S$ is called \emph{finite} if there exist $n\in\N$ and a surjection from $\{0,\ldots,n-1\}$ to $S$. Let $\Fin S$ denote the set of all finite subsets of a set $S$.
\end{definition}
\begin{definition}
    Let $\alpha,\beta$ be ordinals.
    We define a binary relation $\prec$ on $\alpha\times\beta$ by
    \[(\alpha_1,\beta_1)\prec(\alpha_0,\beta_0)
    :\equiv(\alpha_1<\alpha_0)\lor(\beta_1<\beta_0).\]
    For $x\in\alpha\times\beta$, we define a detachable subset $\{\prec x\}\subseteq\alpha\times\beta$ by
    \[\{\prec x\}:=\{y\in\alpha\times\beta:y\prec x\}.\]
    For $S\in\Fin(\alpha\times\beta)$, let
    \[
    \{\prec S\}:=\bigcap_{x\in S}\{\prec x\}.
    \]
    We define a decidable binary relation $<$ on $\Fin(\alpha\times\beta)$ by
    \[
    S<T:\equiv\{\prec S\}\subsetneq\{\prec T\}.
    \]
\end{definition}
    We can regard an element $S\in\Fin(\alpha\times\beta)$ as the position $\{\prec S\}$ of $(\alpha\times\beta)$-chomp.
A function $\size:\Fin(\omega\times\alpha)\to [0,\omega\otimes\alpha]$ is defined in \cite[Section 2]{HS02}.
It satisfies the following conditions:
\begin{enumerate}
    \item $\size(\emptyset)=\omega\otimes\alpha$.
    \item If $S<T$, then $\size S<\size T$.
\end{enumerate}
\begin{definition}
    Let $M$ be a module over a ring $A$.
    Let $M[X]=\{x_0+\cdots+x_{d-1}X^{d-1}:x_0,\ldots,x_{d-1}\in M\}$ denote the polynomial module regarded as an $A[X]$-module.
    We regard a list $[x_0,\ldots,x_{d-1}]\in\List M$ as a polynomial $x_0+\cdots+x_{d-1}X^{d-1}\in M[X]$. Let $\deg{[x_0,\ldots,x_{d-1}]}:=d-1$.
    If $d\ge1$, let $\lc{[x_0,\ldots,x_{d-1}]}:=x_{d-1}$.
\end{definition}
\begin{definition}\label{describe}
Let $\alpha$ be an ordinal and $M$ be a module over a ring $A$.
    Let $\sigma:=[f_0,\ldots,f_{n-1}]\in\List(\List M)$. We say that \emph{$S\in\Fin(\omega\times\alpha)$ describes $\sigma$} if for every $(d,\alpha')\in S$, there exist $l\in\N$ and $g_0,\ldots,g_{l-1}\in\List M$ such that
        \begin{enumerate}
            \item $g_0,\ldots,g_{l-1}\in\gen{\sigma}_{M[X]}$,
            \item $\deg g_0,\ldots,\deg g_{l-1}\le d$, and
            \item $[\lc g_0,\ldots,\lc g_{l-1}]_M$ is $\alpha'$-good.
        \end{enumerate}
\end{definition}
\begin{lemma}\label{list-game}
    Let $\alpha$ be an ordinal and $M$ be an $\alpha$-Noetherian module over a ring $A$. Let $n\in\N$, $f_0,\ldots,f_{n-1}\in\List M$, and $S:=\{(d_0,\alpha_0),\ldots,(d_{m-1},\alpha_{m-1})\}\in\Fin(\omega\times\alpha)$ be an element that describes $\sigma:=[f_0,\ldots,f_{n-1}]$.
    Then, for every $f\in\List M$,
    \begin{enumerate}
        \item $\sigma.f\in\List M[X]$ is good, or
        \item there exists $S'\in\Fin(\omega\times\alpha)$ such that $S'$ describes $\sigma.f$ and $S'<S$.
    \end{enumerate}
\end{lemma}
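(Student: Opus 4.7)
The plan is to proceed by induction on $\deg f$, with the base case $f=[]$ handled trivially since $f=0\in\gen{\sigma}_{M[X]}$ makes $\sigma.f$ good. For the inductive step let $e:=\deg f\ge 0$ and $a:=\lc f$; the strategy is to adjoin a single new point $(e,\beta)\in\omega\times\alpha$ to $S$, with $\beta$ recording the goodness refinement obtained by extending some witness leading-coefficient list with $a$.

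The source of this refinement depends on $S$. If every $(d_j,\alpha_j)\in S$ has $d_j>e$ (including $S=\emptyset$), then any $\beta<\alpha$ places $(e,\beta)$ in $\{\prec S\}$, and the $\alpha$-Noetherianity of $M$ supplies $\beta\in[-1,\alpha)$ such that $[a]_M$ is $\beta$-good. Otherwise pick $(d_{i_0},\alpha_{i_0})\in S$ with $d_{i_0}\le e$ and $\alpha_{i_0}$ minimal among such indices, together with witnesses $g_0,\ldots,g_{l-1}\in\gen{\sigma}_{M[X]}$ of degree $\le d_{i_0}\le e$ whose leading-coefficient list is $\alpha_{i_0}$-good; then the $\alpha_{i_0}$-goodness supplies $\beta\in[-1,\alpha_{i_0})$ for which $[\lc g_0,\ldots,\lc g_{l-1},a]_M$ is $\beta$-good. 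The minimality of $\alpha_{i_0}$ is essential: it forces $\beta<\alpha_{i_0}\le\alpha_j$ whenever $(d_j,\alpha_j)\in S$ with $d_j\le e$, securing $(e,\beta)\in\{\prec S\}$.

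If $\beta\ge 0$, set $S':=S\cup\{(e,\beta)\}$. The original witnesses for points of $S$ remain valid in $\gen{\sigma.f}_{M[X]}\supseteq\gen{\sigma}_{M[X]}$, while $g_0,\ldots,g_{l-1},f$ (or simply $f$) witness the new point $(e,\beta)$; hence $S'$ describes $\sigma.f$. Since adjoining a point lying in $\{\prec S\}$ strictly shrinks the chomp position, $S'<S$.

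The $\beta=-1$ branch is the main obstacle, and it mimics the classical leading-coefficient elimination of Hilbert's basis theorem: the augmented list becomes good, so $a=\sum_k c_k\lc g_k$ for some $c_k\in A$ (with the sum empty when no $(d_j,\alpha_j)\in S$ has $d_j\le e$, yielding $a=0$). Then $f-\sum_k c_k X^{e-\deg g_k}g_k$ has zero top coefficient and therefore admits a list representation $f''$ with $\deg f''<e$. Because $\gen{\sigma.f}_{M[X]}=\gen{\sigma.f''}_{M[X]}$, both the ``describes'' and ``good'' predicates are invariant under replacing $f$ with $f''$, so applying the inductive hypothesis to $f''$ closes this case.
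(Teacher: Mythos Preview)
Your proof is correct and follows essentially the same strategy as the paper's: induction on $\deg f$, selecting the minimal relevant ordinal among the witnesses with degree $\le\deg f$ (or falling back to $\alpha$-Noetherianity of $M$ when there is none), adjoining $(\deg f,\beta)$ to $S$ when $\beta\ge 0$, and eliminating the leading coefficient to invoke the inductive hypothesis when $\beta=-1$. The only cosmetic difference is that the paper packages your case split into a single formula $\alpha':=\min\{\gamma\in\{\alpha,\alpha_0,\ldots,\alpha_{m-1}\}:(\deg f,\gamma)\notin\{\prec S\}\}$, which equals $\alpha$ in your Case~A and your $\alpha_{i_0}$ in Case~B.
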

\begin{proof}
    We prove this by induction on $\deg f$.
    \begin{enumerate}
        \item If $\deg f=-1$, then $f=_{M[X]}0$ and $\sigma.f$ is good.
        \item If $\deg f\ge0$, let $\alpha':=\min\{\gamma\in\{\alpha,\alpha_0,\ldots,\alpha_{m-1}\}:(\deg f,\gamma)\notin\{\prec S\}\}$. Then, there exist $l\in\N$ and $g_0,\ldots,g_{l-1}\in\List M$ such that
        \begin{enumerate}
            \item $g_0,\ldots,g_{l-1}\in\gen{\sigma}_{M[X]}$,
            \item $\deg g_0,\ldots,\deg g_{l-1}\le\deg f$, and
            \item $[\lc g_0,\ldots,\lc g_{l-1}]_M$ is $\alpha'$-good.
        \end{enumerate}
        Hence there exists $\beta\in[-1,\alpha')$ such that $[\lc g_0,\ldots,\lc g_{l-1},\lc f]\in\List M$ is $\beta$-good.
    \begin{enumerate}
        \item If $\beta=-1$, then $[\lc g_0,\ldots,\lc g_{l-1},\lc f]_M$ is good.
        Hence there exists $g\in\List M$ such that $\deg g=\deg f-1$ and $g-f\in\gen{g_0,\ldots,g_{l-1}}\subseteq\gen{\sigma}$.
        By the inductive hypothesis,
        \begin{enumerate}
            \item $\sigma.g\in\List M[X]$ is good, or
            \item 
        there exists $S'\in\Fin(\omega\times\alpha)$ such that $S'$ describes $\sigma.g$ and $S'<S$.
        \end{enumerate}
        If (i) holds, then $\sigma.f$ is good.
        If (ii) holds, then $S'$ describes $\sigma.f$, and $S'<S$.
        \item If $\beta\in[0,\alpha')$, then let $S':=S\cup\{(\deg f,\beta)\}$. Then $S'$ describes $\sigma.f$, and $S'<S$.\qedhere
    \end{enumerate}
    \end{enumerate}
\end{proof}
\begin{lemma}\label{size-good}
    Let $\alpha$ be an ordinal and $M$ be an $\alpha$-Noetherian module over a ring $A$. Let $n\in\N$, $f_0,\ldots,f_{n-1}\in\List M$, and $S:=\{(d_0,\alpha_0),\ldots,(d_{m-1},\alpha_{m-1})\}\in\Fin(\omega\times\alpha)$ be an element which describes $\sigma:=[f_0,\ldots,f_{n-1}]$.
    Then, $\sigma\in\List M[X]$ is $(\size S)$-good.
\end{lemma}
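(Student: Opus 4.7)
The plan is to prove the lemma by transfinite induction on $\size S$, where $\size S$ ranges over $[0,\omega\otimes\alpha]$. The key ingredients are the monotonicity property $S<T\Rightarrow\size S<\size T$ of the size function, which makes the induction well-founded, together with \cref{list-game}, which already carries out the step-by-step analysis of what happens when $\sigma$ is extended by one polynomial.

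The core of the argument goes as follows. Fix $\sigma$ and $S$ describing $\sigma$, and let an arbitrary $f\in\List M = M[X]$ be given. I would apply \cref{list-game} to obtain the dichotomy: either $\sigma.f$ is good, or there exists some $S'\in\Fin(\omega\times\alpha)$ with $S'<S$ that describes $\sigma.f$. In the first case $\sigma.f$ is $(-1)$-good, and $-1\in[-1,\size S)$ since $\size S\ge 0$, so $\beta=-1$ witnesses the required property. In the second case, monotonicity of $\size$ gives $\size S'<\size S$, so the inductive hypothesis applied to the pair $(\sigma.f,S')$ yields that $\sigma.f$ is $(\size S')$-good; here $\size S'\in[0,\size S)\subseteq[-1,\size S)$ serves as the witness. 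Since $f$ was arbitrary, this shows $\sigma$ is $(\size S)$-good, closing the induction.

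I do not expect any real obstacle here: all the combinatorial content has been absorbed into \cref{list-game}, together with the chomp-based construction of $\size$ from \cite{HS02}. The present lemma is essentially a packaging step that recasts the well-founded descent along the ordering $<$ on $\Fin(\omega\times\alpha)$ as a quantitative bound on $\alpha$-goodness at the level of $\sigma$; the only extra fact needed is the monotonicity bullet of $\size$ recalled just before \cref{describe}.
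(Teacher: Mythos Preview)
Your proposal is correct and matches the paper's proof essentially line for line: both argue by induction on $\size S$, invoke \cref{list-game} for the dichotomy, use monotonicity of $\size$ in the second case, and conclude $(\size S)$-goodness of $\sigma$ from the arbitrariness of $f$.
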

\begin{proof}
    Let $f\in\List M$.
    By induction on $\size S$, we prove that there exists $\beta\in[-1,\size S)$ such that $\sigma.f\in\List M[X]$ is $\beta$-good.
    By \cref{list-game},
    \begin{enumerate}
        \item $\sigma.f\in\List M[X]$ is good, or
        \item there exists $S'\in\Fin(\omega\times\alpha)$ such that $S'$ describes $\sigma.f$ and $S'<S$.
    \end{enumerate}
    If (1) holds, then $\sigma.f$ is $(-1)$-good.
    If (2) holds, then $\size S'<\size S$, and $\sigma.f$ is $(\size S')$-good by the inductive hypothesis.
    Hence there exists $\beta\in[-1,\size S)$ such that $\sigma.f$ is $\beta$-good.

    Hence $\sigma$ is $(\size S)$-good.
\end{proof}
\begin{theorem}\label{qhbt}
    Let $\alpha$ be an ordinal and $M$ be an $\alpha$-Noetherian module over a ring $A$. Then $M[X]$ is an $(\omega\otimes\alpha)$-Noetherian $A[X]$-module.
\end{theorem}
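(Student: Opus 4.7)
The plan is to derive \cref{qhbt} as an immediate specialization of \cref{size-good} to the trivial configuration $n = 0$, $\sigma = []$, and $S = \emptyset$.

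First, I would verify that $\emptyset \in \Fin(\omega \times \alpha)$ vacuously describes the empty list $[] \in \List(\List M)$: the condition in \cref{describe} is a universal quantification over pairs $(d,\alpha') \in S$, which has no instances when $S = \emptyset$. So the hypotheses of \cref{size-good} are satisfied for this degenerate data, with the $\alpha$-Noetherianness of $M$ supplied by assumption. Invoking \cref{size-good} then yields that $[]_{M[X]}$ is $(\size \emptyset)$-good.

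Finally, I would use the first of the two properties of $\size$ recalled from \cite[Section 2]{HS02}, namely $\size(\emptyset) = \omega \otimes \alpha$, to rewrite this conclusion as: $[]_{M[X]}$ is $(\omega \otimes \alpha)$-good. By the definition of $\alpha$-Noetherian modules, this says exactly that the $A[X]$-module $M[X]$ is $(\omega \otimes \alpha)$-Noetherian.

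There is essentially no obstacle remaining at this stage: the substantive work has already been carried out in \cref{list-game}, which performs one step of the chomp induction by reducing $\sigma.f$ to either a good list or a strictly smaller describing set $S' < S$, and in \cref{size-good}, which iterates this step along the ordinal $\size S$ to convert descriptions into goodness bounds. The present theorem is simply the base-case bookkeeping that identifies the initial position of $(\omega \times \alpha)$-chomp, namely $\emptyset$, with the starting ordinal $\omega \otimes \alpha$.
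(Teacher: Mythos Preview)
Your proposal is correct and matches the paper's proof essentially line for line: it too observes that $\emptyset$ vacuously describes $[]$, applies \cref{size-good} to obtain that $[]_{M[X]}$ is $(\size\emptyset)$-good, and concludes via $\size(\emptyset)=\omega\otimes\alpha$. Your additional commentary on where the real work lies (\cref{list-game} and \cref{size-good}) is accurate but not part of the formal argument.
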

\begin{proof}
    Since $\emptyset\in\Fin(\omega\times\alpha)$ describes $[]\in\List(\List M)$, the list $[]\in\List(M[X])$ is $\size(\emptyset)$-good.
    Since $\size(\emptyset)=\omega\otimes\alpha$, the module $M[X]$ is $(\omega\otimes\alpha)$-Noetherian.
\end{proof}
The following corollary follows from \cref{qhbt} and the definition of $\alpha$-Noetherian rings:
\begin{corollary}[{Classically proved in \cite[Theorem 3.1]{Bro03}}]
    If $A$ is an $\alpha$-Noetherian ring, then the ring $A[X]$ is $(\omega\otimes\alpha)$-Noetherian.
\end{corollary}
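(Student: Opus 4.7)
The corollary is an immediate specialization of \cref{qhbt} to the regular module, so my plan is essentially a one-line unpacking of definitions. I would take $M:=A$ viewed as a left $A$-module. By the hypothesis that the ring $A$ is $\alpha$-Noetherian, together with the definition of $\alpha$-Noetherian rings (a ring is $\alpha$-Noetherian iff it is $\alpha$-Noetherian as a left module over itself), the module $M=A$ is $\alpha$-Noetherian in the sense required by \cref{qhbt}.

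Applying \cref{qhbt} then yields that the $A[X]$-module $M[X]$ is $(\omega\otimes\alpha)$-Noetherian. The only thing to observe is that, under the construction of $M[X]$ given in the paper, when $M=A$ the $A[X]$-module $M[X]$ coincides as an $A[X]$-module with $A[X]$ viewed as a left module over itself: an element $x_0+\cdots+x_{d-1}X^{d-1}$ of $A[X]$ in the first sense is literally the same element in the second sense, and the $A[X]$-action agrees with left multiplication. Invoking the definition of $\alpha$-Noetherian ring in the other direction then lets us conclude that $A[X]$ is $(\omega\otimes\alpha)$-Noetherian as a ring.

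There is no real obstacle here; the entire content sits in \cref{qhbt}. The only minor thing to be mindful of is to phrase the identification $M[X]=A[X]$ carefully so that the $A[X]$-module structure and the ring-as-left-module-over-itself structure are visibly the same — no subtle coefficient convention needs to be checked beyond what the definition of $M[X]$ already supplies.
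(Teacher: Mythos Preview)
Your proposal is correct and matches the paper's approach exactly: the paper simply notes that the corollary follows from \cref{qhbt} together with the definition of $\alpha$-Noetherian rings, which is precisely the one-line specialization to $M=A$ that you describe.
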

By \cref{ex-noe}, we obtain a new proof of the following well-known constructive results on the Krull dimension:
\begin{example}
    \begin{enumerate}
        \item If $K$ is a discrete field, then $K[X_1,\ldots,X_n]$ is $\omega^n$-Noetherian. In particular, we have $\Kdim K[X_1,\ldots,X_n]<1+n$. This bound on the Krull dimension is constructively proved in \cite[Corollary 4]{CL05} and \cite[Theorem XIII-5.1]{LQ15}.
        \item The ring $\Z[X_1,\ldots,X_n]$ is $\omega^{1+n}$-Noetherian. In particular, we have $\Kdim \Z[X_1,\ldots,X_n]<2+n$. This bound on the Krull dimension is constructively proved in \cite[Theorem XIII-8.20]{LQ15}
    \end{enumerate}
\end{example}
\section{Length of Noetherian modules}
In this section, we reason in ZFC.
We prove that the notion of $\alpha$-Noetherian ring can be written in terms of the length of Noetherian modules defined by Gulliksen \cite{Gul73}.
First, we recall the definition of the length.
\begin{definition}
    Let $M$ be a Noetherian module over a ring $A$. For a submodule $N\le M$, we inductively define an ordinal $\lambda_M(N)$ by
    \[
    \lambda_M(N):=\sup\{\lambda_M(L)+1:N\lneq L\leq M\}.
    \]
    The ordinal $l(M):=\lambda_M(0)$ is called \emph{the length of $M$}.
\end{definition}
\begin{lemma}\label{alpha-N-is-N-lemma}
    Let $\alpha$ be an ordinal, $M$ be a module over a ring $A$, and $M_n$ $(n\in\N)$ be an ascending chain of submodules of $M$.
    If there exists an $\alpha$-good list $\sigma\in\List M$ such that $\gen{\sigma}\subseteq M_0$, then there exists $n\in\N$ such that $M_n=M_{n+1}$.
\end{lemma}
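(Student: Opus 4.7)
The plan is to proceed by transfinite induction on $\alpha$, using classical logic and choice, which are available in this section's ZFC setting. The hypothesis provides an $\alpha$-good list $\sigma$ with $\gen{\sigma}\subseteq M_0$ and an ascending chain $(M_n)_{n\in\N}$ of submodules of $M$.

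The first step is to apply excluded middle to the proposition $M_0=M_1$. If it holds, we are done taking $n=0$. Otherwise, I pick some $x_0\in M_1\setminus M_0$. The $\alpha$-goodness of $\sigma$ then produces $\beta\in[-1,\alpha)$ such that $\sigma.x_0$ is $\beta$-good. The case $\beta=-1$ means $\sigma.x_0$ is good, so $x_0\in\gen{\sigma}\subseteq M_0$, contradicting the choice of $x_0$; this case is therefore ruled out. In the case $\beta\in[0,\alpha)$, I observe that $\gen{\sigma.x_0}\subseteq M_1$ (since $\gen{\sigma}\subseteq M_0\subseteq M_1$ and $x_0\in M_1$) and invoke the inductive hypothesis on the $\beta$-good list $\sigma.x_0$ together with the shifted chain $M_1\subseteq M_2\subseteq\cdots$, obtaining some $n\geq 1$ with $M_n=M_{n+1}$.

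I do not anticipate any serious obstacle; the argument is a direct induction on $\alpha$. The one subtlety worth flagging is that the case $\alpha=0$ need not be singled out, since it is absorbed into the step above: when $\alpha=0$, the interval $[-1,\alpha)$ reduces to $\{-1\}$, ruling out the second case and forcing the dichotomy to resolve as $M_0=M_1$. It is also worth noting that the essential use of excluded middle (to decide $M_0=M_1$ and to extract a witness in $M_1\setminus M_0$) is precisely what places this lemma in the ZFC section rather than in the earlier constructive development.
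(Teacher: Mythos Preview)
Your proof is correct and follows essentially the same approach as the paper's own proof: transfinite induction on $\alpha$, a case split on whether $M_0=M_1$, and in the nontrivial case picking $x\in M_1\setminus M_0$, ruling out $\beta=-1$, and applying the induction hypothesis to the $\beta$-good list $\sigma.x$ and the shifted chain starting at $M_1$. The only cosmetic difference is that the paper records the base case $\alpha=0$ separately, whereas you (correctly) observe that it is absorbed into the general step.
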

\begin{proof}
    We prove this by induction on $\alpha$.
    \begin{enumerate}
        \item If $\alpha=0$, then $M_0=M_1$.
        \item Let $\alpha>0$. We have $M_0=M_1$ or $M_0\subsetneq M_1$.
        \begin{itemize}
            \item If $M_0\subsetneq M_1$, then there exists $x\in M_1$ such that $x\notin M_0$. Since $\sigma$ is $\alpha$-good, there exists $\beta\in[-1,\alpha)$ such that $\sigma.x$ is $\beta$-good.
            Since $x\notin M_0$, we have $x\notin\gen{\sigma}$. Hence $\beta\ne-1$.
            We have $\gen{\sigma.x}\subseteq M_1$.
            Hence, by the inductive hypothesis, there exists $n\ge1$ such that $M_n=M_{n+1}$.
        \end{itemize}
        Hence there exists $n\in\N$ such that $M_n=M_{n+1}$.\qedhere
    \end{enumerate}
\end{proof}
\begin{proposition}\label{alpha-N-is-N}
    Let $\alpha$ be an ordinal, and $M$ be an $\alpha$-Noetherian module over a ring $A$. Then $M$ is Noetherian.
\end{proposition}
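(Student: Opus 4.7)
The plan is to deduce the Noetherian (ACC) property as a direct instantiation of Lemma~\ref{alpha-N-is-N-lemma} with $\sigma := []_M$. By definition of $\alpha$-Noetherianity, $[]_M$ is $\alpha$-good, and trivially $\gen{[]_M} = 0 \subseteq M_0$ for any submodule $M_0 \subseteq M$. So, given any ascending chain $M_0 \subseteq M_1 \subseteq \cdots$ of submodules of $M$, the lemma applies and produces some $n \in \N$ with $M_n = M_{n+1}$.

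The only remaining subtlety is upgrading this single coincidence to the full ACC needed for the length $l(M)$ to be well-defined by the transfinite recursion of the preceding definition. Since this section reasons in ZFC, this is immediate by a standard contradiction argument: if some ascending chain $(M_n)$ failed to stabilize, then by dependent choice one could extract a strictly ascending subchain $M_{n_0} \subsetneq M_{n_1} \subsetneq \cdots$, and applying Lemma~\ref{alpha-N-is-N-lemma} again (with $\sigma = []_M$) to this subchain would yield two consecutive equal terms, a contradiction. Excluded middle and dependent choice are freely available in ZFC, so this upgrade is routine.

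I do not anticipate a real obstacle here: all of the substantive work sits in Lemma~\ref{alpha-N-is-N-lemma}, and the proposition is essentially a one-line corollary, up to the standard classical strengthening from ``some consecutive terms coincide'' to ``the chain stabilizes''.
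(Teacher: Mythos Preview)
Your proposal is correct and matches the paper's proof exactly: the paper's entire argument is the single line ``Let $\sigma:=[]$ in \cref{alpha-N-is-N-lemma}.'' Your additional paragraph upgrading ``some $M_n=M_{n+1}$'' to full stabilization is a reasonable elaboration that the paper leaves implicit (it is indeed needed for $l(M)$ to be well-defined, and is routine in ZFC as you note).
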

\begin{proof}
    Let $\sigma:=[]$ in \cref{alpha-N-is-N-lemma}.
\end{proof}
\begin{lemma}\label{alpha-N-is-alpha-lemma}
    Let $\alpha$ be an ordinal, $M$ be an $\alpha$-Noetherian module over a ring $A$, and $N$ be a submodule of $M$.
    If there exists an $\alpha$-good list $\sigma\in\List M$ such that $\gen{\sigma}\subseteq N$, then $\lambda_M(N)\le\alpha$.
\end{lemma}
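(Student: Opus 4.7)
The plan is transfinite induction on $\alpha$, essentially parallel to the proof of \cref{alpha-N-is-N-lemma} but tracking the ordinal bound. Unpacking the definition of $\lambda_M$, the goal $\lambda_M(N)\le\alpha$ reduces to showing that $\lambda_M(L)+1\le\alpha$ for every strict extension $N\lneq L\le M$, so I would fix an arbitrary such $L$ and aim to produce an ordinal $\beta<\alpha$ with $\lambda_M(L)\le\beta$.

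For the base case $\alpha=0$, a $0$-good list $\sigma$ forces every $x\in M$ to satisfy $x\in\gen{\sigma}$ (since the only option $\beta\in[-1,0)$ is $\beta=-1$), hence $M=\gen{\sigma}\subseteq N$ and $N=M$. Then no strict extension $L$ exists, so $\lambda_M(N)=\sup\emptyset=0\le\alpha$.

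For the inductive step $\alpha>0$, given $N\lneq L\le M$, I would pick some $x\in L\setminus N$. Because $\gen{\sigma}\subseteq N$, we have $x\notin\gen{\sigma}$, so $\sigma.x$ fails to be $(-1)$-good. Since $\sigma$ is $\alpha$-good, the defining clause yields some $\beta\in[-1,\alpha)$ with $\sigma.x$ being $\beta$-good, and the previous sentence forces $\beta\in[0,\alpha)$. Now $\sigma.x$ is $\beta$-good and $\gen{\sigma.x}\subseteq L$, so the inductive hypothesis applied to $L$ in place of $N$ gives $\lambda_M(L)\le\beta$. Therefore $\lambda_M(L)+1\le\beta+1\le\alpha$, and taking the supremum over all such $L$ yields $\lambda_M(N)\le\alpha$.

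There is no real obstacle: the argument is a routine translation of the $\alpha$-good clause into a descent step for $\lambda_M$, with the only subtlety being the observation that $x\notin N$ rules out the case $\beta=-1$, which is exactly what lets the induction eat one level of the ordinal $\alpha$.
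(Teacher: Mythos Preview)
Your proof is correct and essentially identical to the paper's: both argue by induction on $\alpha$, fix $L$ with $N\lneq L$, choose $x\in L\setminus N$, use $\alpha$-goodness of $\sigma$ to get $\beta\in[-1,\alpha)$ with $\sigma.x$ $\beta$-good, rule out $\beta=-1$ via $x\notin\gen{\sigma}\subseteq N$, and invoke the inductive hypothesis with $\gen{\sigma.x}\subseteq L$. Your explicit base case $\alpha=0$ is absorbed into the paper's uniform argument (no $L$ can exist there), but this is only a presentational difference.
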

\begin{proof}
    We prove this by induction on $\alpha$.
    \begin{itemize}
        \item Let $L\leq M$ be a submodule such that $N\lneq L$. Then, there exists $x\in L$ such that $x\notin N$. Since $\sigma$ is $\alpha$-good, there exists $\beta\in[-1,\alpha)$ such that $\sigma.x$ is $\beta$-good. Since $x\notin N$, We have $\beta\ne-1$. Hence, by the inductive hypothesis, $\lambda_M(L)\le\beta$.
    Hence $\lambda_M(L)+1\le\alpha$.
    \end{itemize}
    Hence $\lambda_M(N)\le\alpha$.
\end{proof}
\begin{proposition}\label{alpha-N-is-alpha}
    Let $\alpha$ be an ordinal, and $M$ be an $\alpha$-Noetherian module over a ring $A$. Then $l(M)\le\alpha$.
\end{proposition}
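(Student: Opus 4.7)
The plan is to derive this proposition as an immediate corollary of \cref{alpha-N-is-alpha-lemma}, in exactly the same way that \cref{alpha-N-is-N} followed from \cref{alpha-N-is-N-lemma}. Recall that by definition $l(M) = \lambda_M(0)$, so the goal is to show $\lambda_M(0) \le \alpha$.

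I would instantiate the lemma with the submodule $N := 0$ and the list $\sigma := []_M$. Since $M$ is $\alpha$-Noetherian, the empty list $[]_M$ is $\alpha$-good by definition, and we trivially have $\gen{[]} = 0 \subseteq 0 = N$. The hypotheses of \cref{alpha-N-is-alpha-lemma} are thus satisfied, and the lemma yields $\lambda_M(0) \le \alpha$, which is precisely $l(M) \le \alpha$.

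There is no substantive obstacle: all the real content lives in the inductive proof of \cref{alpha-N-is-alpha-lemma}, where for any strictly larger submodule $L \gneq N$ one picks a witness $x \in L \setminus N$, uses $\alpha$-goodness of $\sigma$ to obtain some $\beta \in [-1,\alpha)$ with $\sigma.x$ being $\beta$-good, rules out $\beta = -1$ (because $x \notin N \supseteq \gen{\sigma}$), and then invokes the inductive hypothesis. For the present proposition this machinery is used only once, at the base list, so the proof reduces to the single instantiation above.
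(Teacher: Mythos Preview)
Your proof is correct and matches the paper's own proof exactly: the paper simply writes ``Let $N:=0$ in \cref{alpha-N-is-alpha-lemma},'' which is precisely your instantiation with $N=0$ and $\sigma=[]_M$.
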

\begin{proof}
    Let $N:=0$ in \cref{alpha-N-is-alpha-lemma}.
\end{proof}
\begin{lemma}\label{N-is-alpha-N-lemma}
    Let $M$ be a Noetherian module over a ring $A$, and $\sigma\in\List M$.
    Then, $\sigma$ is $\lambda_M(\gen{\sigma})$-good.
\end{lemma}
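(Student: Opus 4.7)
The plan is to prove the statement by transfinite induction on the ordinal $\lambda_M(\gen{\sigma})$, which is well-defined for every $\sigma\in\List M$ because $M$ is (classically) Noetherian. The induction hypothesis to use is: for every $\tau\in\List M$ with $\lambda_M(\gen{\tau})<\lambda_M(\gen{\sigma})$, the list $\tau$ is $\lambda_M(\gen{\tau})$-good.

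Fix $\sigma\in\List M$ and an arbitrary $x\in M$; we need to exhibit $\beta\in[-1,\lambda_M(\gen{\sigma}))$ such that $\sigma.x$ is $\beta$-good. Since we are working in ZFC, we apply the law of excluded middle to the statement $x\in\gen{\sigma}$. In the case $x\in\gen{\sigma}$, the list $\sigma.x$ is immediately $(-1)$-good, so $\beta:=-1$ works. In the case $x\notin\gen{\sigma}$, we have the strict inclusion $\gen{\sigma}\lneq\gen{\sigma.x}\leq M$, hence by the defining formula $\lambda_M(\gen{\sigma})=\sup\{\lambda_M(L)+1:\gen{\sigma}\lneq L\leq M\}$ we obtain $\lambda_M(\gen{\sigma.x})+1\leq\lambda_M(\gen{\sigma})$, so $\lambda_M(\gen{\sigma.x})<\lambda_M(\gen{\sigma})$. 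The inductive hypothesis then gives that $\sigma.x$ is $\lambda_M(\gen{\sigma.x})$-good, and setting $\beta:=\lambda_M(\gen{\sigma.x})\in[0,\lambda_M(\gen{\sigma}))$ completes this case.

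Combining both cases, for every $x\in M$ there is $\beta\in[-1,\lambda_M(\gen{\sigma}))$ with $\sigma.x$ being $\beta$-good, so $\sigma$ is $\lambda_M(\gen{\sigma})$-good by definition.

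The main (and only nontrivial) obstacle is the well-foundedness justifying the transfinite induction on $\lambda_M(\gen{\sigma})$; this is precisely what the classical Noetherian hypothesis on $M$ provides, since under ZFC the function $\lambda_M$ is defined by well-founded recursion along the strict-inclusion order of submodules, which has no infinite ascending chain. Everything else is an unpacking of definitions, and no heavy ordinal arithmetic is needed.
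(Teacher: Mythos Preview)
Your proof is correct and follows essentially the same argument as the paper: transfinite induction on $\lambda_M(\gen{\sigma})$, splitting on whether $x\in\gen{\sigma}$, and using $\lambda_M(\gen{\sigma.x})+1\le\lambda_M(\gen{\sigma})$ in the nontrivial case. The paper's version is more terse, but the structure and key steps are identical.
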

\begin{proof}
    We prove this by induction on $\lambda_M(\gen{\sigma})$.
    \begin{itemize}
        \item 
    Let $x\in M$.
    \begin{enumerate}
        \item If $x\in\gen{\sigma}$, then $\sigma.x$ is good.
        \item If $x\notin\gen{\sigma}$, then $\lambda_M(\gen{\sigma.x})+1\le\lambda_M(\gen{\sigma})$.
        Hence, by the inductive hypothesis, $\sigma.x$ is $\lambda_M(\gen{\sigma.x})$-good.
    \end{enumerate}
    \end{itemize}
    Hence $\sigma$ is $\lambda_M(\gen{\sigma})$-good.
\end{proof}
\begin{proposition}\label{N-is-alpha-N}
    Let $M$ be a Noetherian module over a ring $A$.
    Then, $M$ is $l(M)$-Noetherian.
\end{proposition}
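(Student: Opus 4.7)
The plan is to simply specialize Lemma \ref{N-is-alpha-N-lemma} to the empty list. Recall that by definition, a module $M$ is $\alpha$-Noetherian precisely when the empty list $[]_M$ is $\alpha$-good, so I just need to exhibit an ordinal $\alpha \le l(M)$ for which $[]_M$ is $\alpha$-good.

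Concretely, I would instantiate Lemma \ref{N-is-alpha-N-lemma} with $\sigma := []_M$. The conclusion then asserts that $[]_M$ is $\lambda_M(\gen{[]})$-good. Since $\gen{[]} = 0$ (the zero submodule) and $l(M) := \lambda_M(0)$ by definition, this exactly says that $[]_M$ is $l(M)$-good, so $M$ is $l(M)$-Noetherian.

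There is no real obstacle here since the lemma has been established in full generality; the proposition is merely its $\sigma = []$ specialization, strictly parallel to how Proposition \ref{alpha-N-is-N} was extracted from Lemma \ref{alpha-N-is-N-lemma} and Proposition \ref{alpha-N-is-alpha} from Lemma \ref{alpha-N-is-alpha-lemma}. Accordingly, the proof should read essentially ``Let $\sigma := []$ in \cref{N-is-alpha-N-lemma}.''
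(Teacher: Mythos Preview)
Your proposal is correct and matches the paper's proof exactly: the paper's argument is precisely ``Let $\sigma := []$ in \cref{N-is-alpha-N-lemma}.''
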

\begin{proof}
    Let $\sigma:=[]$ in \cref{N-is-alpha-N-lemma}.
\end{proof}
\cref{alpha-N-is-N}, \cref{alpha-N-is-alpha}, and \cref{N-is-alpha-N} together imply the following theorem.
\begin{theorem}\label{length}
    Let $\alpha$ be an ordinal, and $M$ be a module over a ring $A$. Then $M$ is $\alpha$-Noetherian if and only if $M$ is Noetherian and $l(M)\le\alpha$.
\end{theorem}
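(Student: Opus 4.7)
The plan is to derive the theorem directly from the three propositions that have just been proved in this section, with only a short monotonicity observation needed as glue.

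For the forward implication, assume $M$ is $\alpha$-Noetherian. Then \cref{alpha-N-is-N} gives that $M$ is Noetherian in the classical sense, so that the ordinal $l(M)$ is well defined, and \cref{alpha-N-is-alpha} gives $l(M)\le\alpha$. For the reverse implication, assume $M$ is Noetherian with $l(M)\le\alpha$. By \cref{N-is-alpha-N}, $M$ is $l(M)$-Noetherian; it remains to promote $l(M)$-Noetherianity to $\alpha$-Noetherianity. For this I would record a trivial monotonicity remark: if $\sigma\in\List M$ is $\beta$-good and $\beta\le\alpha$, then $\sigma$ is $\alpha$-good, since any witness $\gamma\in[-1,\beta)$ produced by $\beta$-goodness of $\sigma$ at some $x\in M$ lies \emph{a fortiori} in $[-1,\alpha)$. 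Specializing this with $\sigma:=[]$ and $\beta:=l(M)$ finishes the reverse direction.

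There is essentially no obstacle in this assembly, since all the substantive work has already been absorbed into \cref{alpha-N-is-N-lemma}, \cref{alpha-N-is-alpha-lemma}, and \cref{N-is-alpha-N-lemma} (and the propositions they feed), each of which is proved by a straightforward transfinite induction. The only conceptual point worth flagging is the monotonicity remark above, which is what makes the slack between the minimal ordinal $l(M)$ and an arbitrary upper bound $\alpha$ harmless.
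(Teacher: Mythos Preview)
Your proposal is correct and follows exactly the paper's approach: the paper simply states that \cref{alpha-N-is-N}, \cref{alpha-N-is-alpha}, and \cref{N-is-alpha-N} together imply the theorem, and you have merely made explicit the trivial monotonicity step (\,$\beta\le\alpha$ implies $\beta$-good $\Rightarrow$ $\alpha$-good\,) that the paper leaves implicit.
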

\section*{Acknowledgments}
The author would like to express his deepest gratitude to his supervisor, Ryu Hasegawa, for his support.
The author would like to thank Thierry Coquand for his helpful advice.
The author would also like to thank Yuto Arai, Ryuya Hora, Sangwoo Kim, Koki Kurahashi, Yuto Ikeda, Hiromasa Kondo, Haruki Toyota, and Minoru Sekiyama for their helpful comments in the graduate seminar.

This research was supported by Forefront Physics and Mathematics Program to Drive Transformation (FoPM), a World-leading Innovative Graduate Study (WINGS) Program, the University of Tokyo.

\end{document}